\documentclass{ifacconf}

\usepackage{graphicx}      
\usepackage{natbib}        

\usepackage{amsfonts}
\usepackage{amssymb}
\usepackage{enumitem}
\usepackage{amsmath}
\usepackage{mathtools}
\usepackage{color}

\newcommand{\R}{\mathbb{R}}
\newcommand{\N}{\mathbb{N}}
\newcommand{\Rn}{\R^n}

\newcommand{\II}{\mathcal{I}}
\newcommand{\JJ}{\mathcal{J}}
\newcommand{\GG}{\mathcal{G}}
\newcommand{\VV}{\mathcal{V}}
\newcommand{\EE}{\mathcal{E}}
\newcommand{\UU}{\mathcal{U}}
\newcommand{\WW}{\mathcal{W}}

\DeclareMathOperator{\distG}{\operatorname{d_\mathcal{G}}} 

\begin{document}
\begin{frontmatter}

\title{Decaying Sensitivity of the Zero Solution for a Class of Nonlinear Optimal Control Problems} 

\thanks[footnoteinfo]{The authors thank Andrii Mironchenko, Manuel Schaller, and Karl Worthmann for valuable discussions and helpful comments. This work was funded by the Deutsche Forschungsgemeinschaft (DFG, German Research Foundation) - project number 463912816.}

\author[First]{Lars Grüne} 
\author[First]{Mario Sperl}

\address[First]{Mathematical Institute, University of Bayreuth, Bayreuth, Germany (e-mail: lars.gruene@uni-bayreuth.de, mario.sperl@uni-bayreuth.de).}

\begin{abstract}                
We study spatial decay properties of sensitivities in a nonlinear optimal control problem with a graph-structured interaction topology. For a problem with nonlinear decoupled dynamics and quadratic cost, we show that a perturbation of the zero initial condition at a single node induces an optimal trajectory whose node-wise norms decay exponentially with the graph distance from the perturbed node. The analysis, based on a nonlinear null-controllability condition, provides a first step toward extending known spatial decay results from linear–quadratic to nonlinear systems. A numerical example illustrates the theoretical findings.
\end{abstract}

\begin{keyword}
 optimal control, interconnected nonlinear systems, decaying sensitivity,  numerical methods for optimal control, large-scale systems
\end{keyword}
\end{frontmatter}

\section{Introduction}
High-dimensional optimal control problems arise naturally in a wide range of applications, including large-scale networked systems, multi-agent coordination, traffic or energy systems, and distributed robotics. In such settings, the system consists of a large number of interacting subsystems or agents, and the resulting state space grows proportionally with the number of components. As a consequence, classical centralized solution approaches quickly become computationally infeasible, a phenomenon commonly referred to as the curse of dimensionality (\cite{Bellman1957}). 

To address this challenge, significant effort has been devoted to the development of decentralized and distributed control strategies. A key idea underlying these approaches is that, in many large-scale systems, interactions between subsystems exhibit a localized structure: individual agents are directly influenced only by a limited subset of other agents, for instance, through spatial proximity or network connections. Exploiting such locality properties is essential for reducing computational complexity and enabling scalable control designs. This raises the fundamental question of how perturbations or changes in the state of one subsystem affect the optimal behavior of other subsystems. In particular, understanding the sensitivity of optimal trajectories and value functions with respect to localized perturbations is crucial for justifying decentralized approximations and localized solution methods. If the influence of a perturbation decays sufficiently fast with respect to an appropriate notion of distance between subsystems, then the global optimal control problem may be well approximated by considering only localized subproblems.

Motivated by these considerations, recent research has focused on sensitivity analysis and spatial decay properties in large-scale optimal control problems. In \cite{sperl2023separable,sperl2026separable}, a separable approximation of the optimal value function is constructed based on a spatial decay property of sensitivities between subsystems. This property enables an efficient representation of the optimal value function using separable structured neural networks. A crucial ingredient for the scalability of the approach is that the sensitivity decay holds uniformly with respect to the system dimension. A natural objective is therefore to identify control-theoretic conditions that guarantee uniform spatially decaying sensitivity.

Building on earlier work on spatial decay phenomena in graph-structured nonlinear optimization problems (see \cite{shin2022expDecayNLP}), the paper \cite{shin2022lqr} establishes an exponential decay of the optimal feedback matrix for time-discrete linear–quadratic optimal control problems under stabilizability and detectability assumptions. Complementary to this line of work, \cite{zhang2023optimal} shows that the entries of the optimal feedback matrix decay exponentially, assuming that the system matrices exhibit a corresponding decay structure. We emphasize that all these results require linearity of the dynamics.

Further notions of decay and sensitivity have been investigated in other contexts. Spatial decay of sensitivities has also been studied for optimal control problems governed by partial differential equations and more general evolution equations, see \cite{goettlich2026spatialexponentialdecayperturbations,goettlich2025perturbationspdeconstrainedoptimalcontrol,oppeneiger2025spatialdecayperturbationshyperbolic}. Exponential decay with respect to time, rather than space, is for example analyzed in \cite{shin2021controllability,na2020exponential,GrSS20}. Moreover, scalable reinforcement learning methods exploiting exponential decay properties of the $Q$-function are proposed in \cite{qu2022scalableReinforcement}.

In this work, we establish spatial decay properties of sensitivities for a class of finite-dimensional nonlinear optimal control problems under an exponential null-controllability assumption. In particular, we investigate how the effect of a nonzero initial value in a single subsystem propagates through a network of coupled subsystems, and show that this effect decays with their distance in the network. Our result can be viewed as a first step toward a nonlinear extension of the sensitivity decay results for linear–quadratic optimal control problems obtained in \cite{shin2022lqr}. However, the present result is not a straightforward generalization of the linear–quadratic setting since the analysis therein relies crucially on matrix-based arguments that are no longer available in the nonlinear case. The nonlinear nature of the problem introduces fundamentally new difficulties, which require a different proof strategy. In particular, in order to derive decay properties without resorting to localization arguments, we introduce a null-controllability condition tailored to the nonlinear setting. To focus on this core mechanism, we consider a simplified yet nontrivial class of problems with nonlinear but decoupled dynamics and a quadratic cost functional. 

The outline of the paper is as follows. After introducing the problem setting in the next section, Section~\ref{sec:main_result} presents the main theorem. Its proof is given in Section~\ref{sec:proof}. A numerical test case is discussed in Section~\ref{sec:numerics}, and the paper concludes with Section~\ref{sec:conclusion}.

\section{Setting} \label{sec:setting}

\subsection{Dynamics and Norm Conventions}

We consider decoupled nonlinear dynamics  

\begin{equation}
    \dot x_i = f_i(x_i,u_i) 
    \label{eq:sysi}
\end{equation}
with $i=1,\ldots,s$, where the vector fields $f_i \colon \R^{n_i}\times\R^{m_i}\to\R^{n_i}$  satisfy $f_i(0,0)=0$ for all $i$. The initial conditions at time $t=0$ are denoted by $x_{0,i}$. The aggregated state and control vectors are given by  
\[
x = \left(\begin{array}{c}x_1\\ \vdots\\ x_s \end{array} \right) \in\R^n,
u = \left(\begin{array}{c}u_1\\ \vdots\\ u_s \end{array} \right) \in\R^m,
\]
where $n=\sum_{i=1}^s n_i$, and $m = \sum_{i=1}^s m_i$. 

We consider a set of initial values $\Omega \subseteq \Rn$ containing a neighborhood of the origin and denote by $\UU$ the set of admissible controls, given as a suitable space of measurable functions $u : [0,\infty) \to U$ with $U \subseteq \mathbb{R}^m$. Furthermore, we assume that the system \eqref{eq:sysi} is forward complete, i.e., for any $x_0 \in \Omega$ and any $u \in \UU$ the solution $x(\cdot,x_0,u)$ of \eqref{eq:sysi} exists and is unique for all nonnegative times.

For an index set $\II \subseteq \{ 1, \dots, s\}$ we denote by $x_\II \in \R^{n_\II}$ and $u_\II \in \R^{m_\II}$ the vectors that comprise the states and control from all subsystems in $\II$, respectively, with $n_\II := \sum_{i \in \II} n_i$ and $m_\II := \sum_{i \in \II} m_i$. 
We endow $\R^n$ with the norm
\[
\|x\| = \biggl(\sum_{i=1}^s |x_i|^2 \biggr)^{1/2},
\]
where each $|\cdot|$ denotes a fixed (but otherwise arbitrary) norm on $\mathbb R^{n_i}$.  For any index set $\II \subseteq \{ 1, \dots, s\}$ we use the same convention for the subvectors $x_{\II}$ of $x$, i.e.,
\[
\|x_{\II}\| = \biggl(\sum_{i\in\II} |x_i|^2 \biggr)^{1/2}.
\]
This norm satisfies two elementary properties that will be used repeatedly in the following. First, for any function $x \colon \R_{\geq 0} \to \Rn$, its squared $L_2$-norm decomposes componentwise:
\begin{align*}
    \begin{split} 
        \|x\|_{L_2}^2 & = \int_0^\infty \|x(t)\|^2 dt = \int_0^\infty \sum_{i=1}^s |x_i(t)|^2 dt \\ 
        & = \sum_{i=1}^s \int_0^\infty |x_i(t)|^2 dt = \sum_{i=1}^s \|x_i\|_{L_2}^2. 
    \end{split}
\end{align*}

Second, if $\II_1 \subseteq \II_2 \subseteq \{1, \dots, s \}$ are index sets, then monotonicity with respect to the index set holds:
\[
\|x_{\II_1}\| \le \|x_{\II_2}\|.
\]
The same monotonicity property applies to the associated $L_2$-norms of time-dependent functions.

\subsection{Optimal Control Problem}
The subsystems \eqref{eq:sysi} are coupled via quadratic costs
\begin{equation}
    \ell(x,u) = \sum_{i,j=1}^s x_i^T Q_{ij} x_j + \sum_{k=1}^s u_k^T R_k u_k = x^TQx + u^TRu, 
\label{eq:ell}
\end{equation} 
where $Q$ and $R$ are assumed to be symmetric and positive definite. Hence, there exist constants $\mu, M_Q, M_R > 0$ such that for all $x \in \Rn$ 
\begin{equation} \label{eq:Q_R_bounds}
    \mu\|x\|^2 \le x^TQ x, \quad \|Qx\| \le M_Q \|x\|, \quad \|Rx\| \le M_R\|x\|.
\end{equation}
We consider the infinite horizon optimal control problem of minimizing the cost functional
\begin{equation}
    J(x_0,u) = \int_0^\infty \ell\big(x(t,x_0,u),u(t)\big) dt 
    \label{eq:J}
\end{equation}
for a given initial value $x_0 \in \Omega$ over all admissible controls $u \in \UU$. We assume that for every $x_0 \in \Omega$ there exists an optimal control $u^* \in \UU$ such that 
\begin{equation*}
    J(x_0, u^*) = \inf_{u \in \UU} J(x_0, u).
\end{equation*}
If the optimal control is not unique, $u^*$ is understood to denote a fixed representative. We denote the corresponding optimal trajectory via $x^*(\cdot) = x(\cdot, x_0, u^*)$.
If this assumption is not satisfied, we expect that the following results can be extended to nearly optimal trajectories; however, we leave a rigorous analysis of this case for future work.

While the dynamics of the subsystems in \eqref{eq:sysi} are decoupled, the overall optimal control problem with cost functional $J$ defined in \eqref{eq:J} introduces a coupling through the matrix $Q$ in \eqref{eq:ell}. We represent this interconnection of the subsystems by an undirected graph $\GG = (\VV, \EE)$, where each subsystem corresponds to a node identified with its index, i.e., we set $\VV = \{1, \dots, s\}$. For each pair of distinct nodes $i \neq j$, the unordered pair $\{i, j\}$ is an edge in the graph if and only if $Q_{ij} \neq 0$ (which, by the symmetry of $Q$, is equivalent to $Q_{ji} \neq 0$). The graph distance between two nodes is defined as the length of the shortest path connecting them and is denoted by $\distG(i,j)$. Note that $\distG(i,j) \ge 2$ for $i \neq j$ if and only if $Q_{ij} = 0$. For a nonempty subset $\WW \subseteq \VV$ and a node $i \in \VV$, we define $\distG(i,\WW) = \min_{j\in \WW} \distG(i,j)$. In the following, we refer to the setting introduced in this subsection as an optimal control problem (OCP) of the form \eqref{eq:sysi} - \eqref{eq:J}.
 
\subsection{Problem Formulation}
Note that from the assumptions on $f_i$, $Q$, and $R$ it immediately follows that $x^*(t)\equiv 0$ with control $u^*(t)\equiv 0$ is the optimal solution for the initial condition $x_0=0$. In this paper, we investigate how the solution changes across the different subsystems if a single subsystem has a nonzero initial value $x_{0,i^*}\neq 0$ while all other initial values remain $0$. Our aim is to quantify how this influence propagates through the network, with an estimate that reflects the distance of each subsystem from $i^*$ and remains independent of the overall dimension, i.e., independent of $s$ and $n_i$, and thus also independent of $n$.

\section{Main Result} \label{sec:main_result}
To formulate our main result on sensitivity decay, we impose the following assumption of exponential null-controllability for a system of equations \eqref{eq:sysi}. 

\begin{assum}
    There are $C,\sigma>0$ such that for each $x_0\in\Omega$ there is a control $u_{x_0} \in \UU$ such that for all $t \geq 0$ the inequalities
    \begin{equation}
        \|x(t,x_0,u_{x_0})\| \le Ce^{-\sigma t}\|x_0\|, \; \|u_{x_0}(t)\| \le Ce^{-\sigma t}\|x_0\|
        \label{eq:asumm_null_contr}
    \end{equation}
    hold.
\label{asm:expstab}
\end{assum}

Assumption \ref{asm:expstab} ensures that for every initial value $x_0 \in \Omega$ there exists an exponentially bounded control that steers the trajectory exponentially fast to the origin. In particular, any asymptotically controllable system  \eqref{eq:sysi} on a compact domain $\Omega$, which has a stabilizable linearization at $0$ satisfies Assumption \ref{asm:expstab}. Given this assumption, the main result guarantees that the sensitivity of the zero solution decreases exponentially as a function of the distance in the underlying graph of subsystems. In the following, for $a \in \mathbb{R}$, we write $\lceil a \rceil$ for the smallest integer greater than or equal to $a$.

\begin{thm} Consider an OCP of the form \eqref{eq:sysi}-\eqref{eq:J} satisfying Assumption \ref{asm:expstab}. Pick any $i^* \in \VV$ and let the initial value $x_0 \in \Omega$ be such that $x_{0,i^*}\ne 0$ and $x_{0,i}=0$ for all $i\ne i^*$. Denote the optimal trajectory starting at $x_0$ by $x^*$. Then for any subset $\WW\subseteq \VV$, it holds that
\begin{equation} \label{eq:main_inequ}
    \|x_\WW^*\|_{L_2} \le S \rho^{\distG(i^*,\WW)}|x_{0,i^*}|,
\end{equation}
where $S=2\max\{C_{\textrm{init}},C_{\textrm{prop}}\}$, $\rho = 2^{-1/q}<1$, $q=\lceil S \rceil^2$, 
\begin{equation*}
    C_{\textrm{init}} =\sqrt{\frac{M_Q+M_R}{2\sigma\mu}}C, \quad  C_{\textrm{prop}} =2 M_Q \mu^{-1}, 
\end{equation*}
and $\mu, M_Q, M_R > 0$ are chosen to satisfy the matrix-inequalities \eqref{eq:Q_R_bounds}, i.e., 
\begin{equation*}
     \mu\|x\|^2 \le x^TQ x, \quad \|Qx\| \le M_Q \|x\|, \quad \|Rx\| \le M_R\|x\|. 
\end{equation*}
\label{thm:main}
\end{thm}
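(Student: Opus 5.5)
The plan is to combine two estimates: an \emph{initial bound} on the whole optimal trajectory, and a \emph{propagation bound} that compares the tail of the trajectory beyond graph distance $k$ with the layer at distance exactly $k-1$. Iterating the propagation bound will then give geometric decay. For $k\ge 0$ let $\VV_{\ge k}=\{i\in\VV:\distG(i^*,i)\ge k\}$, $\VV_{=k}=\{i:\distG(i^*,i)=k\}$ and $a_k=\|x^*_{\VV_{\ge k}}\|_{L_2}^2$. Since $\WW\subseteq\VV_{\ge d}$ with $d=\distG(i^*,\WW)$, monotonicity of the norm in the index set reduces \eqref{eq:main_inequ} to bounding $a_d$.

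\emph{Step 1 (initial bound).} By optimality and Assumption~\ref{asm:expstab} (note $\|x_0\|=|x_{0,i^*}|$),
\begin{equation*}
J(x_0,u^*)\le J(x_0,u_{x_0})\le \int_0^\infty (M_Q+M_R)C^2e^{-2\sigma t}|x_{0,i^*}|^2\,dt=\frac{(M_Q+M_R)C^2}{2\sigma}|x_{0,i^*}|^2 .
\end{equation*}
Combining this with $J(x_0,u^*)\ge \mu\|x^*\|_{L_2}^2$ gives $a_0\le C_{\textrm{init}}^2|x_{0,i^*}|^2$.

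\emph{Step 2 (propagation).} Fix $k\ge1$, and set $B=\VV_{\ge k}$ and $A=\VV\setminus B$. Define a competitor control $\tilde u$ by $\tilde u_A=u^*_A$ and $\tilde u_B=0$. This requires that such componentwise modifications are admissible, which I will assume holds for $\UU$. Because the dynamics are decoupled, $f_i(0,0)=0$, and $x_{0,i}=0$ for $i\in B$ (since $i^*\notin B$), the resulting trajectory satisfies $\tilde x_A=x^*_A$ and $\tilde x_B\equiv0$. Optimality gives $J(x_0,u^*)-J(x_0,\tilde u)\le 0$. Expanding $\ell$, this difference equals
\begin{equation*}
\int_0^\infty \Big( x_B^{*T}Q_{BB}x^*_B+2x_A^{*T}Q_{AB}x^*_B+u_B^{*T}R_Bu^*_B \Big)\,dt .
\end{equation*}
Since $Q_{BB}$ is a principal submatrix of $Q$, it satisfies $x_B^TQ_{BB}x_B\ge\mu\|x_B\|^2$, and $R_B\succ0$. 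Moreover, $Q_{ij}=0$ whenever $i\in A$, $j\in B$ and $i\notin\VV_{=k-1}$. Therefore, using Cauchy--Schwarz and $\|Qx\|\le M_Q\|x\|$ (applied to the vector $x^*_B$ padded with zeros),
\begin{equation*}
\mu a_k\le 2M_Q\|x^*_{\VV_{=k-1}}\|_{L_2}\,\|x^*_B\|_{L_2},
\end{equation*}
that is, $\sqrt{a_k}\le C_{\textrm{prop}}\|x^*_{\VV_{=k-1}}\|_{L_2}$. Since $\|x^*_{\VV_{=k-1}}\|_{L_2}^2=a_{k-1}-a_k$, we obtain $a_k\le c\,(a_{k-1}-a_k)$ with $c=C_{\textrm{prop}}^2$, and hence $a_k\le \frac{c}{1+c}\,a_{k-1}$.

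\emph{Step 3 (constants).} Iterating Step 2 and using Step 1 gives $\sqrt{a_d}\le C_{\textrm{init}}\big(\tfrac{c}{1+c}\big)^{d/2}|x_{0,i^*}|$. It then remains to check $\big(\tfrac{c}{1+c}\big)^{1/2}\le\rho=2^{-1/q}$, which is equivalent to $\ln(1+1/c)\ge 2\ln 2/q$. We have $\ln(1+1/c)\ge 1/(1+c)$, $c\le S^2/4$ and $q\ge S^2$, so it suffices that $S^2/4+1\le S^2/(2\ln2)$. This holds because $S\ge 2C_{\textrm{prop}}=4M_Q/\mu\ge4$; here $M_Q\ge\mu$ follows from $\mu\|x\|^2\le x^TQx\le M_Q\|x\|^2$. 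Finally $C_{\textrm{init}}\le S$, which gives \eqref{eq:main_inequ}. If the intended argument instead halves $a_k$ every $q$ steps, the same recursion yields it directly, so either route gives the stated constants.

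The main obstacle I expect is Step 2. One must make sure that the cross terms only involve the boundary layer $\VV_{=k-1}$, and that the truncated control is admissible and leaves the $A$-part of the trajectory unchanged. This is exactly where the decoupled dynamics are essential. Everything else is bookkeeping of constants.
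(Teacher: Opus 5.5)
Your proof is correct, and it takes a genuinely different route from the paper's. The paper splits your Step~2 into two lemmas. Lemma~\ref{lemma:pertprop} bounds the optimal solution of a reduced OCP on $\II$ driven by arbitrary exogenous signals $\hat x_\JJ$, by comparison with the zero control. Lemma~\ref{lem:optpert} shows that $x^*_\II$ is optimal for that reduced problem when $\hat x_\JJ=x^*_\JJ$. Your direct comparison of $u^*$ with the truncated control $\tilde u$ gives the same inequality $\sqrt{a_k}\le C_{\textrm{prop}}\|x^*_{\VV_{=k-1}}\|_{L_2}$. It bypasses the reduced problem, the existence of its minimizers, and the non-uniqueness footnote. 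The admissibility of $\tilde u$ that you assume is also used implicitly by the paper in Lemma~\ref{lem:optpert}.

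The substantive difference is the iteration. The paper applies the propagation bound only once per block of $q=\lceil S\rceil^2$ layers. A pigeonhole argument locates a layer $k^*$ with $\|x^*_{W_{k^*}}\|_{L_2}\le\hat\rho^{k+1}|x_{0,i^*}|$, so the tail halves every $q$ layers, and interpolating between multiples of $q$ costs the factor $2$ in $S$. You apply the bound at every layer and close it with the identity $\|x^*_{\VV_{=k-1}}\|_{L_2}^2=a_{k-1}-a_k$, a hole-filling argument. This gives the one-step contraction $a_k\le\frac{c}{1+c}\,a_{k-1}$.

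Your route is shorter and sharper. You actually prove $\|x^*_\WW\|_{L_2}\le C_{\textrm{init}}\,\theta^{\distG(i^*,\WW)}|x_{0,i^*}|$ with $\theta=C_{\textrm{prop}}/\sqrt{1+C_{\textrm{prop}}^2}\le\rho$. Here the decay rate depends only on $M_Q/\mu$, and $C,\sigma,M_R$ enter only the prefactor. In the paper, by contrast, a large $C_{\textrm{init}}$ also pushes $\rho$ towards $1$. Your Step~3 comparison recovering the stated $S$ and $\rho$ is correct. What the paper's organization buys is modularity: Lemma~\ref{lemma:pertprop} is a standalone estimate for arbitrary exogenous inputs, a natural building block for the extensions mentioned in the conclusion.
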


Observe that for the particular case where $\WW = \{ j \}$ for some $j \in \VV$, Theorem \ref{thm:main} states that the norm of the solution in subsystem $j$ diminishes exponentially with respect to the distance of the nodes $i^*$ and $j$ in the graph. In other words, distant subsystems are affected only weakly by a local perturbation of the initial condition and the effect of a perturbation is concentrated near its origin.

\begin{rem}[Dimension-Independent Bounds]
For the application of Theorem~\ref{thm:main} to high-dimensional optimal control problems, it is desirable that the constants in~\eqref{eq:main_inequ} are independent of the state-space dimension. To make this precise, consider a family of optimal control problems of increasing dimension for which the constants in Theorem~\ref{thm:main} can be chosen uniformly across the family. By inspecting the dependence of the constants $S$ and $\rho$, this requires $C$ and $\sigma$ from Assumption \ref{asm:expstab} as well $\mu$, $M_Q$, and $M_R$ in~\eqref{eq:Q_R_bounds} to be uniformly bounded. 
Regarding $C$ and $\sigma$ from Assumption~\ref{asm:expstab}, in the present setting of decoupled dynamics, uniformity is naturally satisfied when the dimension increases by adding subsystems from a fixed class of agents with identical dynamics. Concerning the constants $M_R$ and $M_Q$, structural properties such as block-diagonality of $R$ and sparsity of $Q$ are beneficial, but additional assumptions on the growth of the blocks and couplings across the family are required to ensure uniform bounds.
Establishing general conditions guaranteeing such uniformity properties, as well as relaxing these requirements, is left for future work.
\end{rem}

\section{Proof of the main result} \label{sec:proof}
In this section, we prove Theorem \ref{thm:main}. Throughout, we consider an optimal control problem of the form \eqref{eq:sysi}–\eqref{eq:J}, and all statements are understood with respect to this setting. The proof is based on two auxiliary bounds on the norms of optimal trajectories, which are established in Subsections \ref{subsec:initial_pert} and \ref{subsec:prop_pert} and subsequently combined in Subsection \ref{subsec:proof_main}. 

\subsection{Influence of the initial perturbation} \label{subsec:initial_pert}
We begin by estimating how a perturbation of the initial value $x_0 = 0$ in one component, i.e. setting $x_{0,i^*} \neq 0$ for some $i^* \in \VV$, affects the norm of the optimal trajectory of the overall system. 

\begin{lem}
    Let $i^* \in \VV$. Consider an initial value $x_0 \in \Omega$ with $x_{0,i^*}\ne 0$ and $x_{0,i}=0$ for all $i\ne i^*$ and denote the optimal trajectory starting at $x_0$ by $x^*$. Let Assumption \ref{asm:expstab} hold. Then the inequality
    \[ \|x^*\|_{L_2} \le C_{\textrm{init}} |x_{0,i^*}| \]
    holds with $C_{\textrm{init}} =\sqrt{\frac{M_Q+M_R}{2\sigma\mu}}C$.
\label{lemma:inipert}
\end{lem}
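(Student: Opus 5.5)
The plan is to sandwich the optimal cost $J(x_0,u^*)$. From below it controls $\|x^*\|_{L_2}^2$ through the coercivity of $Q$. From above it is bounded by the cost of the null-controlling input $u_{x_0}$ from Assumption~\ref{asm:expstab}.

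\emph{Lower bound.} Since $R$ is positive definite, $u^T R u \ge 0$. Together with $\mu\|x\|^2 \le x^TQx$ from \eqref{eq:Q_R_bounds}, this gives
\[
\mu \|x^*\|_{L_2}^2 = \mu\int_0^\infty \|x^*(t)\|^2\,dt \le \int_0^\infty \ell\big(x^*(t),u^*(t)\big)\,dt = J(x_0,u^*).
\]

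\emph{Upper bound.} Optimality gives $J(x_0,u^*) \le J(x_0,u_{x_0})$. We bound the quadratic terms by Cauchy--Schwarz and \eqref{eq:Q_R_bounds}. Since $\|\cdot\|$ is built from arbitrary block norms $|\cdot|$, we read $x^TQx \le \|x\|\,\|Qx\|$ as the intended convention; this holds if the block norms are Euclidean, and otherwise it is exactly what the constant $M_Q$ is meant to encode. This yields $x^TQx \le M_Q\|x\|^2$ and $u^TRu \le M_R\|u\|^2$. Inserting the exponential bounds \eqref{eq:asumm_null_contr} gives
\[
J(x_0,u_{x_0}) \le \int_0^\infty (M_Q+M_R)\,C^2 e^{-2\sigma t}\|x_0\|^2\,dt = \frac{(M_Q+M_R)C^2}{2\sigma}\,\|x_0\|^2 .
\]

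\emph{Conclusion.} Because all components of $x_0$ except the $i^*$-th vanish, $\|x_0\| = |x_{0,i^*}|$. Combining the two bounds gives
\[
\|x^*\|_{L_2}^2 \le \frac{(M_Q+M_R)C^2}{2\sigma\mu}\,|x_{0,i^*}|^2 .
\]
Taking square roots yields the claim with $C_{\textrm{init}}$ as stated.

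The only delicate point is the upper bound $x^TQx \le M_Q\|x\|^2$. It requires the pairing $x^TQx$ to be dominated by $\|x\|\,\|Qx\|$. That is immediate for Euclidean block norms; in general it is a matter of interpreting the constant $M_Q$ appropriately. Finiteness of $J(x_0,u^*)$ is guaranteed by the same comparison, so the argument is otherwise routine.
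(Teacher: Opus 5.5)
Your proof is correct and matches the paper's argument step for step. You compare $J(x_0,u^*)$ with the cost of the null-controlling input $u_{x_0}$ from Assumption~\ref{asm:expstab}, use $\|x_0\| = |x_{0,i^*}|$, and bound $\|x^*\|_{L_2}^2 \le \mu^{-1} J(x_0,u^*)$ via the coercivity of $Q$ and the positive definiteness of $R$; your caveat is also well taken, since the paper silently uses $x^TQx \le M_Q\|x\|^2$ and $u^TRu \le M_R\|u\|^2$, which do not follow from \eqref{eq:Q_R_bounds} for arbitrary block norms (they do for Euclidean ones, or if $M_Q$, $M_R$ are read as bounds on the quadratic forms).
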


\begin{pf}
    Observe that $\|x_0\| = |x_{0,i^*}|$, as all other components of $x_0$ equal $0$. Hence, from Assumption \ref{asm:expstab} we know that there exist $C, \sigma > 0$ and $u_{x_0} \in \UU$ such that for all $t \geq 0$ the estimates in \eqref{eq:asumm_null_contr} hold. This implies that
    \begin{align*}
    & J(x_0,u_{x_0}) = \int_0^\infty \ell(x(t,x_0,u_{x_0}),u_{x_0}(t)) dt \\ 
    & \le \int_0^\infty(M_Q+M_R)C^2e^{-2\sigma t}|x_{0,i^*}|^2 dt\\
    & = \frac{(M_Q+M_R)C^2}{2\sigma} |x_{0,i^*}|^2.
\end{align*} 
    Since the optimal control $u^*$ generating the optimal trajectory $x^*$ minimizes $J$, it follows that
    \[ 
    J(x_0,u^*) 
    \! = \! \int_0^\infty \! \ell(x^*(t),u^*(t)) dt 
    \le \frac{(M_Q+M_R)C^2}{2\sigma} |x_{0,i^*}|^2.
    \]
    Moreover, by \eqref{eq:Q_R_bounds} we have that
    \begin{align*}
        \|x^*\|_{L_2}^2  & = \int_0^\infty \|x^*(t)\|^2 dt \\
        & \le \frac1\mu \int_0^\infty x^*(t)^T Q x^*(t) dt \le \frac1\mu J(x_0,u^*).
    \end{align*}
    The combination of these inequalities yields the assertion after taking the square root.
    \hfill\qed
\end{pf}

Note that the proof of Lemma~\ref{lemma:inipert} does not rely on the decoupled structure of the considered dynamics in \eqref{eq:sysi}; it only requires the matrix bounds in~\eqref{eq:Q_R_bounds} and the null-controllability condition in Assumption~\ref{asm:expstab}. The particular structure of the dynamics will be needed in the following subsection.

\subsection{Propagation of error perturbation} \label{subsec:prop_pert}
Lemma \ref{lemma:inipert} has shown how an initial perturbation affects the optimal solution. In order to see how the resulting perturbation propagates through the graph, we need a second lemma, which we provide in this subsection. 

Consider two index sets $\II$, $\JJ \subseteq \VV$, $\II \cap \JJ = \emptyset$, and fix some measurable functions $\hat x_j \colon \R_{\geq 0} \to \R^{n_j}$ for all nodes $j \in \JJ$. These functions may be interpreted as externally prescribed (or perturbed) trajectories, and our goal is to quantify how such perturbations influence the optimal 
trajectories associated with the nodes in $\II$. To this end, we define
\begin{align*}
    \ell_\II(x_\II, u_\II, \hat x_\JJ) =  \underbrace{\sum_{i\in\II, j\in \JJ} (x_i^T Q_{ij} \hat x_j + \hat x_j^T Q_{ji} x_i)}_{=: \ell_1(x_\II, \hat x_\JJ)} \\
    + \underbrace{\sum_{i, i'\in\II} x_i^T Q_{i i'} x_{i'}}_{=:\ell_2(x_\II)}
    + \underbrace{\sum_{i\in\II} u_i^T R_i u_i}_{=:\ell_3(u_\II)}
\end{align*}
and denote by $J_\II$ the corresponding infinite horizon cost functional
\begin{equation*}
    J_\II(x_{\II,0},u_\II,\hat x_\JJ) = \int_0^\infty \ell_\II(x_\II(t, x_{\II,0}, u_\II), u_\II(t), \hat x_\JJ(t)) dt. 
\end{equation*}
The resulting reduced OCP then takes the form 
    \begin{align}
        \begin{split} \label{eq:ocp_II}
            \min_{u_\II} \, & J_\II(x_{\II,0},u_\II,\hat x_\JJ), \\ 
            \text{s.t. } & \dot x_i = f_i(x_i, u_i), \, i \in \II.
        \end{split}
\end{align}
The following Lemma estimates the norm of the optimal solution of $\eqref{eq:ocp_II}$ in dependence of the norm of the fixed functions $\hat x_\JJ$. 

\begin{lem}
    Let $\II, \JJ \subseteq \VV$ be disjoint index sets. For fixed functions $\hat x_\JJ$, consider the reduced OCP \eqref{eq:ocp_II} with initial condition $x_{\II,0} = 0$. Then the optimal solution $\tilde x^*_\II$ of \eqref{eq:ocp_II} satisfies
    \begin{equation*}
        \| \tilde x^*_\II\|_{L_2} \le C_{\text{prop}} \| \hat x_\JJ\|_{L_2}
    \end{equation*}
    with $ C_{\textrm{prop}} =2 M_Q \mu^{-1}$. 
\label{lemma:pertprop}
\end{lem}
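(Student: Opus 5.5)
The idea is to compare the optimal cost with the cost of the trivial control. Since the initial condition is $x_{\II,0}=0$ and $f_i(0,0)=0$, the control $u_\II\equiv 0$ is admissible and produces the trajectory $x_\II\equiv 0$. For this trajectory $\ell_1$, $\ell_2$ and $\ell_3$ all vanish, so $J_\II(0,0,\hat x_\JJ)=0$. Optimality of $\tilde u^*_\II$ then gives
\[
J_\II(0,\tilde u^*_\II,\hat x_\JJ)\le 0 .
\]
This is the only place where the decoupled structure \eqref{eq:sysi} is used: the dynamics of the nodes in $\II$ do not depend on $\hat x_\JJ$, so the zero trajectory stays feasible whatever perturbation is prescribed.

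Next I split the cost into its three parts. The term $\ell_3\ge 0$ because $R$ is positive definite, so it can be dropped. For $\ell_2$, note that $\ell_2(x_\II)=x^TQx$ for the vector $x\in\R^n$ that has components $x_\II$ in $\II$ and zero elsewhere. By \eqref{eq:Q_R_bounds} this gives $\ell_2(x_\II)\ge \mu\|x_\II\|^2$. Together these yield
\[
\mu\|\tilde x^*_\II\|_{L_2}^2\le -\int_0^\infty \ell_1\bigl(\tilde x^*_\II(t),\hat x_\JJ(t)\bigr)\,dt .
\]

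It remains to bound the cross term. By symmetry of $Q$, $\ell_1=2\,x^TQ\hat x$, where $x$ is $x_\II$ padded with zeros and $\hat x$ is $\hat x_\JJ$ padded with zeros. With the Euclidean inner product this gives $|\ell_1|\le 2\|x\|_2\|Q\hat x\|_2$. The norms $\|\cdot\|$ are built from arbitrary block norms, so I have to be careful here. I would interpret $\|Qx\|\le M_Q\|x\|$ as an operator bound compatible with the pairing $|y^Tz|\le\|y\|\,\|z\|$, or else use a dual-norm bound, which is effectively what the stated constant $2M_Q\mu^{-1}$ presupposes. This gives pointwise
\[
|\ell_1|\le 2M_Q\|x_\II(t)\|\,\|\hat x_\JJ(t)\|.
\]
Integrating and applying Cauchy--Schwarz in $L_2$ leads to
\[
\mu\|\tilde x^*_\II\|_{L_2}^2\le 2M_Q\|\tilde x^*_\II\|_{L_2}\|\hat x_\JJ\|_{L_2}.
\]
If $\|\tilde x^*_\II\|_{L_2}=0$ there is nothing to prove. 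Otherwise, dividing by $\|\tilde x^*_\II\|_{L_2}$ gives the claim with $C_{\text{prop}}=2M_Q\mu^{-1}$.

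The main obstacle is a technical one: finiteness and well-definedness of the integrals. If $\hat x_\JJ\notin L_2$ the statement is trivial. Otherwise I need $\tilde x^*_\II\in L_2$ so that the splitting and the division are legitimate. The inequality $J_\II\le 0$ suggests this. If $\ell_2$ had infinite integral, the cross term $\ell_1$ would be of lower order by the same Cauchy--Schwarz estimate and could not compensate it. To make this rigorous, I would run the argument on finite horizons $[0,T]$ and let $T\to\infty$, or simply take the existence of a finite-cost optimal solution as part of the standing assumption.
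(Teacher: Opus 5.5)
Your proposal is correct and follows the paper's proof essentially step for step: the zero control gives $J_\II\le 0$, you drop $\ell_3$, use $\ell_2\ge\mu\|x_\II\|^2$ and $\ell_1\ge -2M_Q\|x_\II\|\,\|\hat x_\JJ\|$, and the paper makes the same tacit Cauchy--Schwarz-type pairing for $\ell_1$ that you flag. The one difference is that the paper absorbs the cross term pointwise with Young's inequality ($\gamma=\mu/(2M_Q)$), giving $\ell_1+\ell_2\ge \frac{\mu}{2}\|x_\II\|^2-\frac{2M_Q^2}{\mu}\|\hat x_\JJ\|^2$, rather than applying Cauchy--Schwarz in $L_2$ and dividing; this yields the same constant and shows $\tilde x^*_\II\in L_2$ directly, which removes the finiteness issue you identified as the main obstacle.
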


\begin{pf}
Inserting $u_\II^0\equiv 0$, we obtain that $x_\II(t,x_{\II,0},u_\II^0)\equiv 0$ and thus by the definition of $\ell_\II$
\[ J_\II(x_{\II,0},u_\II^0,\hat x_\JJ) = 0. \]
Denoting the optimal control for \eqref{eq:ocp_II} with initial value $0$ by $\tilde u_\II^*$, this implies that 
\begin{equation}
    J_\II(x_{\II,0},\tilde u_\II^*,\hat x_\JJ) \le 0.
\label{eq:Jle0}\end{equation} 
Since $R$ is positive definite, we have $\ell_3(\tilde u_\II^*(t))\ge 0$ for all $t$. Thus, from \eqref{eq:Jle0} we can conclude that
\begin{equation}
    \int_0^\infty \ell_1(\tilde x_\II^*(t), \hat x_\JJ(t)) + \ell_2(\tilde x_\II^*(t)) dt \le 0.
\label{eq:intell12}\end{equation}
Next we observe that if we take a vector $x_\II\in\R^{n_\II}$ and extend it to a vector $x\in\R^n$ by setting $x_k = 0$ for all $k \in \VV \setminus \II$, we obtain that 
\begin{equation*}
    \ell_2(x_\II) = \ell(x,0) \ge \mu\|x\|^2 = \mu\|x_\II\|^2.
\end{equation*}
By extending $\hat x_\JJ\in\R^{n_\JJ}$ to a vector $\hat x\in\R^n$ in the same way, using \eqref{eq:Q_R_bounds} we get 
\begin{align*}
    \ell_1(x_\II, \hat x_\JJ) & = x^TQ \hat x + \hat x^T Q x \geq - \| x^TQ \hat x  \| - \| \hat x^T Q x \|\\ 
    & \ge -2M_Q\|x\|\,\|\hat x\| = -2M_Q\|x_\II\|\,\|\hat x_\JJ\|.
\end{align*}
Together with \eqref{eq:intell12} this yields 
\begin{align}
\begin{split} \label{eq:intquad}
    & \int_0^\infty - 2M_Q \|\tilde x_\II^*(t)\|\,\|\hat x_\JJ(t)\| + \mu\|\tilde x_\II^*(t)\|^2 dt \\
    \le & \int_0^\infty \ell_1(\tilde x_\II^*(t), \hat x_\JJ(t)) + \ell_2(\tilde x_\II^*(t)) dt \le 0.
\end{split}
\end{align}
Now Young's inequality states that $\|\tilde x_\II^*(t)\|\,\|\hat x_\JJ(t)\|\le \gamma \|\tilde x_\II^*(t)\|^2/2 + \|\hat x_\JJ(t)\|^2/(2\gamma)$ for each $\gamma>0$ and $t \geq 0$. From \eqref{eq:intquad} we can thus conclude
\begin{equation*}
    \int_0^\infty -M_Q \gamma\|\tilde x_\II^*(t)\|^2 -M_Q \|\hat x_\JJ(t)\|^2/\gamma + \mu\|\tilde x_\II^*(t)\|^2 dt \le 0, 
\end{equation*}
which yields 
\begin{equation*}
    (\mu-M_Q \gamma)\|\tilde x^*_\II\|_{L_2}^2 \le \frac{M_Q}{\gamma}\| \hat x_\JJ\|_{L_2}^2.
\end{equation*}
Choosing $\gamma = \mu (2M_Q)^{-1}$ this implies 
\begin{equation*}
    \|\tilde x^*_\II\|_{L_2}^2 \le \frac{4 M_Q^2}{\mu^2}\| \hat x_\JJ\|_{L_2}^2,
\end{equation*}
which yields the assertion after taking the square root. 
\hfill\qed
\end{pf}

In the next subsection, we apply Lemma \ref{lemma:pertprop} in the following setting. We first solve the overall OCP \eqref{eq:sysi} - \eqref{eq:J} and then choose $\hat x_j$ as the corresponding optimal trajectories, that is, $\hat x_j = x^*_j$ for $j \in \JJ$. The lemma below states that, under the assumption that every connection originating from a node in $\II$ leads to a node contained in $\II \cup \JJ$, the optimal solution $\tilde x_\II^*$ of the reduced OCP \eqref{eq:ocp_II} coincides\footnote{In case the optimal solutions are not unique, ``coincide'' is to be understood in the following sense: for each optimal solution of the overall problem there exists a solution of the reduced problem whose components for $i\in\II$ coincide, and vice versa.} 
with $x_\II^*$, i.e., with the components of the overall optimal solution belonging to the nodes in $\II$.

\begin{lem}
Consider an OCP of the form \eqref{eq:sysi}-\eqref{eq:J} with initial value $x_0 \in \Omega$. Denote the optimal solution starting at $x_0$ by $x^*$. Further, let $\II$, $\JJ \subseteq \VV$, $\II \cap \JJ = \emptyset$, be two disjoint index sets which satisfy
\begin{equation}
    \forall j \in \VV \colon \distG(j,\II) = 1 \implies j \in \JJ. 
    \label{eq:II_JJ_connection}
\end{equation}
For $j \in \JJ$ set  $\hat x_j = x^*_j$. Then $x_\II^*$ is an optimal solution of the reduced OCP \eqref{eq:ocp_II} with initial value $x_{0, \II}$.  
\label{lem:optpert}
\end{lem}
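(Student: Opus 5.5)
We argue by contradiction with a splicing argument that exploits the decoupled dynamics. The idea is to write the full cost $J(x_0,u)$ as the reduced cost $J_\II$ plus terms that do not depend on the controls $u_\II$ at all. Set $\mathcal K := \VV\setminus\II$. Since $Q$ is symmetric, the stage cost splits as
\begin{equation*}
\ell(x,u) = \ell_2(x_\II) + \ell_3(u_\II) + \sum_{i\in\II,\,k\in\mathcal K}\bigl(x_i^TQ_{ik}x_k + x_k^TQ_{ki}x_i\bigr) + \sum_{k,k'\in\mathcal K} x_k^TQ_{kk'}x_{k'} + \sum_{k\in\mathcal K} u_k^TR_ku_k .
\end{equation*}
By \eqref{eq:II_JJ_connection}, every $k\in\mathcal K\setminus\JJ$ satisfies $\distG(k,\II)\ge 2$. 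By the definition of the graph, this means $Q_{ik}=0$ for all $i\in\II$. The cross sum therefore runs only over $k\in\JJ$. Once we substitute $x_\JJ=\hat x_\JJ = x^*_\JJ$, that cross sum is exactly $\ell_1(x_\II,\hat x_\JJ)$.

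Because the dynamics \eqref{eq:sysi} are decoupled, the trajectory $x_i(\cdot)$ for $i\in\II$ depends only on $x_{0,i}$ and $u_i$. Likewise, $x_k(\cdot)$ for $k\in\mathcal K$ depends only on $x_{0,k}$ and $u_k$. Now fix $u_{\mathcal K} = u^*_{\mathcal K}$ and let $u_\II$ be arbitrary. Write $u=(u_\II,u^*_{\mathcal K})$ for the concatenated control, which we assume is admissible. The $\mathcal K$-components of the trajectory then equal $x^*_{\mathcal K}$, and we obtain
\begin{equation*}
J(x_0,(u_\II,u^*_{\mathcal K})) = J_\II(x_{0,\II},u_\II,\hat x_\JJ) + \int_0^\infty \Bigl(\sum_{k,k'\in\mathcal K} x_k^{*T}Q_{kk'}x^*_{k'} + \sum_{k\in\mathcal K} u_k^{*T}R_ku^*_k\Bigr)dt .
\end{equation*}
The second term is a constant $c$ that does not depend on $u_\II$. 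It is finite, because $J(x_0,u^*)<\infty$ and the integrand of $c$ is a nonnegative quadratic form, being a principal block of $Q$ and $R$.

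Now suppose some $u_\II$ yields $J_\II(x_{0,\II},u_\II,\hat x_\JJ) < J_\II(x_{0,\II},u^*_\II,\hat x_\JJ)$. Then $(u_\II,u^*_{\mathcal K})$ has strictly smaller total cost than $u^*$, which contradicts the optimality of $u^*$. Hence $u^*_\II$ is optimal for \eqref{eq:ocp_II}, and its trajectory is $x^*_\II$.

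The main technical point is integrability. We must split the integral of $\ell$ into separate integrals, and for this each piece needs to be finite. I would handle it as follows. Since $J(x_0,u^*)<\infty$ and $\ell\ge\mu\|x\|^2$, we have $x^*\in L_2$. For a competitor $u_\II$ we may assume $J_\II$ is finite and less than $+\infty$. To split the competitor's cost rigorously, one can assume $x_\II\in L_2$, which holds for the relevant comparison controls, and then each term is integrable by Cauchy--Schwarz. Alternatively, one can compare the costs on finite horizons $[0,T]$ and pass to the limit $T\to\infty$. The converse direction mentioned in the footnote follows by the same splicing identity. I would also state explicitly that concatenations of admissible controls remain admissible, as is implicit in the choice of $\UU$.
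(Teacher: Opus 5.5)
Your proposal is correct and follows essentially the same argument as the paper. The neighborhood condition forces $Q_{ik}=0$ for $i\in\II$ and $k\notin\II\cup\JJ$, so $J$ splits into $J_\II$ plus a term independent of $u_\II$. Splicing a better $u_\II$ with $u^*_{\VV\setminus\II}$, which is feasible thanks to the decoupled dynamics, would then contradict the optimality of $u^*$.

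Your extra care about integrability and admissibility of the concatenated control fills in details the paper leaves implicit. The assumption $x_\II\in L_2$ for competitors is in fact automatic once $J_\II<\infty$, since $\ell_\II\ge\frac{\mu}{2}\|x_\II\|^2-\frac{2M_Q^2}{\mu}\|\hat x_\JJ\|^2$ with $\hat x_\JJ=x^*_\JJ\in L_2$.
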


\begin{pf}
    From \eqref{eq:II_JJ_connection} we obtain that for any node $k \notin (\II \cup \JJ)$ it holds that $\distG(k,I) \geq 2$, which, by the definition of $\GG$ implies $Q_{i,k} = Q_{k,i} = 0$ for all $i \in \II$. Thus, the cost function $\ell_\II$ in \eqref{eq:ocp_II} contains all terms of $\ell$ in \eqref{eq:ell} that involve nodes $i \in \II$. Hence, the remaining terms in $\ell$ depend only on nodes in $\VV \setminus \II$ and are thus not affected by the minimization of $J_\II$. As a consequence, if there existed a solution with a lower value of $J_\II$ than $x_\II^*$, then combining this solution with $x^*_{\GG \setminus \II}$---which by the decoupled dynamics \eqref{eq:sysi} yields a feasible trajectory of the overall system---would result in a solution with a lower value of $J$ than $x^*$. This would contradict the optimality of $x^*$. 
\end{pf}
   
\subsection{Proof of Theorem \ref{thm:main}} \label{subsec:proof_main}
We are now in a position to combine the results from the previous subsections to establish the proof of Theorem \ref{thm:main}.

\begin{pf}[Theorem \ref{thm:main}]
For $k \in \N$ we define the sets 
\begin{equation*}
    V_k:=\{ i\in \VV \,|\, \distG(i,i^*) \ge k\}, 
\end{equation*}
and 
\begin{equation*}
    W_k:=\{ i\in \VV \,|\, \distG(i,i^*) = k\}.
\end{equation*}
We first prove the inequality
\begin{equation}
    \|x^*_{V_{qk}}\|_{L_2} \le \widehat S \hat\rho^k|x_{0,i^*}|,
\label{eq:vk}
\end{equation} 
for $\hat\rho =1/2$, $\widehat S=\max\{C_{\textrm{init}},C_{\textrm{prop}}\}$ and $q$ from the statement of the theorem by induction over $k$. 

For $k=0$ we can use Lemma \ref{lemma:inipert} to obtain the inequality
\[  \|x^*_{V_0}\|_{L_2} = \|x^*\|_{L_2} \le C_{\textrm{init}} |x_{0,i^*}|, \]
which proves \eqref{eq:vk} for $k=0$ since $\widehat S\ge C_{\textrm{init}}$.

For the induction step $k\to k+1$, the induction assumption is that \eqref{eq:vk} holds for some $k$. Then we obtain
\begin{equation}
    \sum_{k'\ge qk} \|x^*_{W_{k'}}\|_{L_2}^2 = \|x^*_{V_{qk}}\|_{L_2}^2 \le \big(\widehat S \hat \rho^k|x_{0,i^*}|\big)^2.
    \label{eq:indasm}
\end{equation}
This implies that there is $k^*\in\{qk,qk+1,\ldots,qk+q-1\}$ such that
\begin{equation}
    \|x^*_{W_{k^*}}\|_{L_2} \le \hat\rho^{k+1}|x_{0,i^*}|.
    \label{eq:W_k_star}
\end{equation} 
This holds, because otherwise 
\begin{align*}
    \sum_{k' = qk}^{q(k+1)-1} \|x^*_{W_{k'}}\|_{L_2}^2 > q \big(\hat \rho^{k+1}|x_{0,i^*}|\big)^2 \geq \big(\widehat S \hat \rho^k|x_{0,i^*}|\big)^2,
\end{align*}
which contradicts \eqref{eq:indasm}. 

We now choose $\JJ=W_{k^*}$, $\II=V_{k^*+1}$, and set $\hat x_j=x^*_j$ for all $j\in\JJ$. Note that this choice of $\II$ and $\JJ$ satisfies the condition \eqref{eq:II_JJ_connection} of Lemma \ref{lem:optpert}. Hence, applying Lemma \ref{lemma:pertprop} and Lemma \ref{lem:optpert} together with \eqref{eq:W_k_star} yields
\begin{align*}
    \|x^*_{V_{k^*+1}}\|_{L_2} = & \|x^*_\II\|_{L_2} \le C_{\textrm{prop}} \| \hat x_\JJ\|_{L_2} \\
    = & C_{\textrm{prop}} \| x^*_{W_{k^*}}\|_{L_2} \le C_{\textrm{prop}} \rho^{k+1}|x_{0,i^*}|.
\end{align*}

Since $k^*\le qk+q-1$ and the $V_k$ are decreasing with increasing $k$, the same inequality holds for $\|x^*_{V_{qk+q}}\|_{L_2}$. As $qk+q = q(k+1)$, this yields \eqref{eq:vk} for $k+1$ and thus proves the induction step.

Now \eqref{eq:vk} implies
\[ \|x_{V_{p}}^*\|_{L_2} \le \widehat S \hat\rho^{p/q}|x_{0,i^*}|  \]
for any $p \in \N$ being a multiple of $q$. For each such $p$ and all $k\in \{p+1,\ldots,p+q-1\}$ we obtain
\begin{align*}
    \|x_{V_{k}}^*\|_{L_2} \le & \|x_{V_{p}}^*\|_{L_2}  
    \le \widehat S \hat\rho^{p/q}|x_{0,i^*}| = \widehat S \hat\rho^{k/q}\hat\rho^{(p-k)/q}|x_{0,i^*}| 
    \\ \le & 2 \widehat S \rho^k |x_{0,i^*}| 
    = S \rho^k|x_{0,i^*}|.
\end{align*}
This shows the assertion, since for any $\WW \subseteq \VV$ defining $k \coloneqq \distG(i^*,\WW)$ yields $\WW \subseteq V_k$ and thus $\|x_\WW^*\|_{L_2} \le \|x^*_{V_{k}}\|_{L_2}$. 
\end{pf}

\section{Numerical Experiment} \label{sec:numerics}

In this section, we numerically illustrate the theoretical findings of Theorem~\ref{thm:main}. We consider a group of $s \in \N$ vehicles. Each vehicle is modeled as a two-dimensional system with state $x_i = (y_i, v_i) \in \R^2$, $i \in \VV$, where $y_i$ and $v_i$ denote the position and velocity of the $i$-th vehicle, respectively. The resulting state dimension is therefore $n = 2s$. 

The dynamics of each vehicle are based on a double-integrator model, augmented by viscous and quadratic drag terms, and are given by
\begin{equation*}
    \dot x_i = f_i(y_i, u_i) = \begin{bmatrix}
        v_i \\ 
        - \beta v_i - \kappa v_i | v_i| + u_i,
    \end{bmatrix}
\end{equation*}
where $\beta, \kappa \in \R_{\ge 0}$ are fixed parameters and $u_i$ denotes the control input of vehicle $i$. Such quadratic drag terms have, for instance, been used in \cite{ray2015analytic} and in Example~7.1 of \cite{slotine1991applied} to model motions in fluids.

The control objective is to stabilize a chain of vehicles at the origin while penalizing deviations between neighboring vehicles. This reflects the goal of bringing all vehicles to rest while maintaining coherence of the formation and leads to the cost functional
\begin{align*}
    	J(x,u) 
    = \int_0^\infty & \sum_{i = 1}^{s} y_i^2  + \gamma v_i^2 
    + \sum_{i = 1}^{s-1} (y_{i+1}-y_i)^2 
    + \delta \lVert u\rVert_2^2 \, dt,
\end{align*}
where $\gamma, \delta \in \R_{> 0}$. We can write $J$ in the form \eqref{eq:J} with the cost function $\ell$ as in \eqref{eq:ell} by setting $R = \delta I_s$ and 
\begin{equation*}
    Q = \begin{bmatrix}
		3 & 0 & -1 & 0 & \dots \\ 
		  0 & \gamma  & 0 & 0 & \dots & \\ 
		-1 & 0 & 3 & 0  &  -1  & \\ 
        0 & 0 & 0 & \gamma & 0 & \\
		\vdots  & \vdots &  &  & \ddots & 
    \end{bmatrix} - e_1 e_1^T - e_{n-1} e_{n-1}^T \in \R^{n \times n}, 
\end{equation*}
where $e_i \in \Rn$ denotes the $i$-th canonical unit vector. 

The corresponding graph $\GG$, induced by the block-tridiagonal structure of $Q$, contains one node for each vehicle and connects each vehicle to its immediate predecessor and successor; see Figure~\ref{fig:convoy_graph}.

\begin{figure}[h]
\begin{center}
\includegraphics[width=8cm]{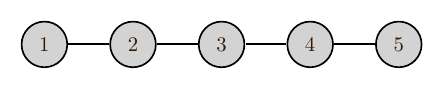}   
\caption{Graph for $s=5$ vehicles.} 
\label{fig:convoy_graph}
\end{center}
\end{figure}
Note that, overall, this yields an OCP of the form \eqref{eq:sysi}-\eqref{eq:J} with decoupled dynamics and coupling introduced through the matrix $Q$. Moreover, it can be shown that Assumption \ref{asm:expstab} is satisfied for a bounded domain $\Omega \subset \R^{n}$ of initial values. By choosing feedback functions $u_i = F_i(x_i) = - y_i + \kappa v_i |v_i| $ we obtain the linear closed-loop system 
\begin{equation*}
    \dot x_i = \begin{pmatrix}
        0 & 1 \\ 
        -1 & -\beta
    \end{pmatrix} x_i
\end{equation*}
for each vehicle $i \in \VV$, which is exponentially stable for any $\beta > 0$. Thus, there exist $C, \sigma > 0$ such that for all $x_{0,i} \in \R^2$ we have $ \| x_i(t, x_{0,i}, u_i) \| \leq C e^{- \sigma t} \| x_{0,i} \|$. It follows that 
\begin{align*}
    | u_i(t) | & = \big| - y_i(t) + \kappa |v_i(t)| v_i(t) \big| \\
    & \leq (C + \kappa C^2 \sup_{x \in \Omega} \| x \|) e^{- \sigma t} \| x_{0,i} \|, 
\end{align*}
which verifies Assumption~\ref{asm:expstab} by combining the component-wise estimates to obtain bounds for $x(t)$ and $u(t)$.

Now for some fixed $i^* \in \VV$ we specify an initial condition $x_0 \in \Rn$ such that
\begin{align*}
    x_{0,i^*} = \begin{bmatrix} 1 \\ 1 \end{bmatrix} \in \R^2,
    \qquad
    x_{0,i} = 0 \in \R^2 \quad \text{for all } i \neq i^*.
\end{align*}
We then employ a model predictive control scheme to numerically approximate the optimal trajectory. To this end, we choose a fixed step size $h > 0$ and perform $N \in \N$ MPC steps until the approximated optimal trajectory satisfies $\|x^*(hN)\| \leq \varepsilon$ for a fixed threshold $\varepsilon > 0$. We then approximate
\begin{align*}
\|x^*\|_{L_2}^2
&= \int_0^\infty \|x^*(t)\|^2 dt
\approx h \sum_{k=1}^{N} \|x^*(hk)\|^2 .
\end{align*}
The nonlinear MPC controller is implemented using \texttt{do-mpc} (see \cite{fiedler2023dompc}). We consider the case of $s = 25$ vehicles, i.e., $n = 50$, with model parameters 
$\beta = 5$ and $\kappa = 10$, regularization parameters $\gamma = \delta = 10^{-1}$, and choose $i^* = 12$. Using a step size of $h = 5 \times 10^{-2}$ and a tolerance  $\varepsilon = 10^{-4}$, the algorithm terminates after $317$ iterations. 
Figure~\ref{fig:decay_single} shows the ${L}_2$-norms of the optimal state trajectories $x_i^*$ for all vehicles $i \in \VV$ on a logarithmic scale, for $i^* = 12$ as well as for the cases $i^* \in \{1,25\}$. The results clearly demonstrate an  exponential decay of the trajectory norms with respect to the graph distance, in agreement with Theorem~\ref{thm:main}.

\begin{figure}[h]
\begin{center}
\includegraphics[width=8.6cm]{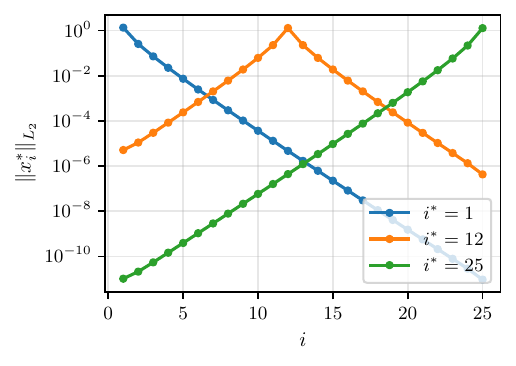}
\vspace{-0.5cm}
\caption{Decay of the ${L}_2$-norms of the optimal state trajectories $x^*_i$ for $i^*\in\{1,12,25\}$.} 
\label{fig:decay_single}
\end{center}
\end{figure}

\section{Conclusion} \label{sec:conclusion}

In this paper, we studied spatial decay properties of sensitivities in nonlinear optimal control problems with decoupled dynamics and quadratic cost. Under the assumption of exponential null-controllability, we showed that a perturbation of the zero initial condition at a single node leads to perturbations in the components of the optimal trajectory that decay exponentially with respect to graph distance from the perturbed node. For future research, it is of interest to extend the result to nonzero reference trajectories, more general classes of nonlinear dynamics with coupling between subsystems, as well as to broader classes of cost functionals.

\bibliography{literature}                                                           

@Article{shin2022lqr,
  author   = {Shin, Sungho and Lin, Yiheng and Qu, Guannan and Wierman, Adam and Anitescu, Mihai},
  journal  = {SIAM J. Control Optim.},
  title    = {Near-Optimal Distributed Linear-Quadratic Regulator for Networked Systems},
  year     = {2023},
  number   = {3},
  pages    = {1113-1135},
  volume   = {61},
}

@Article{shin2021controllability,
 author    = {Shin, Sungho and Zavala, Victor M},
 journal   = {IFAC-PapersOnLine},
 title     = {Controllability and observability imply exponential decay of sensitivity in dynamic optimization},
 volume = {54},
 number = {6},
 pages = {179-184},
 year = {2021},
}

@Article{shin2022expDecayNLP,
  author   = {Shin, Sungho and Anitescu, Mihai and Zavala, Victor M.},
  journal  = {SIAM J. Optim.},
  title    = {Exponential Decay of Sensitivity in Graph-Structured Nonlinear Programs},
  year     = {2022},
  number   = {2},
  pages    = {1156-1183},
  volume   = {32},
}

@inproceedings{zhang2023optimal,
  title={On the optimal control of network {LQR} with spatially-exponential decaying structure},
  author={Zhang, Runyu Cathy and Li, Weiyu and Li, Na},
  booktitle={2023 American Control Conference (ACC)},
  pages={1775--1780},
  year={2023},
  organization={IEEE}
}

@article{na2020exponential,
  title={Exponential decay in the sensitivity analysis of nonlinear dynamic programming},
  author={Na, Sen and Anitescu, Mihai},
  journal={SIAM J. Optim.},
  volume={30},
  number={2},
  pages={1527--1554},
  year={2020},
  publisher={SIAM}
}

@inproceedings{sperl2023separable,
  title={Separable approximations of optimal value functions under a decaying sensitivity assumption},
  author={Sperl, Mario and Saluzzi, Luca and Gr{\"u}ne, Lars and Kalise, Dante},
  booktitle={2023 62nd IEEE Conference on Decision and Control (CDC)},
  pages={259--264},
  year={2023},
  organization={IEEE}
}

@book{Bellman1957,
  author    = {Richard Bellman},
  title     = {Dynamic Programming},
  publisher = {Princeton University Press},
  year      = {1957},
  address   = {Princeton, NJ}
}

@article{qu2022scalableReinforcement,
author = {Qu, Guannan and Wierman, Adam and Li, Na},
title = {Scalable Reinforcement Learning for Multiagent Networked Systems},
year = {2022},
issue_date = {November-December 2022},
publisher = {INFORMS},
address = {Linthicum, MD, USA},
volume = {70},
number = {6},
issn = {0030-364X},
journal = {Oper. Res.},
pages = {3601–3628},
numpages = {28}
}

@Article{GrSS20,
  author    = {L. Gr\"une and M. Schaller and A. Schiela},
  journal   = {J. Differential Equations},
  title     = {Exponential sensitivity and turnpike analysis for linear quadratic optimal control of general evolution equations},
  year      = {2020},
  number    = {12},
  pages     = {7311--7341},
  volume    = {268}
}

@article{goettlich2025perturbationspdeconstrainedoptimalcontrol,
      title={Perturbations in {PDE}-constrained optimal control decay exponentially in space}, 
      author={Simone Göttlich and Manuel Schaller and Karl Worthmann},
    journal={ESAIM Control Optim. Calc. Var.},
    volume={31},
    number={27},
    year={2025},
    publisher={EDP Sciences}
}

@article{goettlich2026spatialexponentialdecayperturbations,
	author = {{G\"ottlich, Simone} and {Oppeneiger, Benedikt} and {Schaller, Manuel} and {Worthmann, Karl}},
	title = {Spatial exponential decay of perturbations in optimal control of general evolution equations},
	journal = {ESAIM Control Optim. Calc. Var.},
	year = {2026},
	volume = {32},
	number = {11},
}

@Article{sperl2026separable,
  author  = {Sperl, Mario and Saluzzi, Luca and Kalise, Dante and Gr\"{u}ne, Lars},
  journal = {SIAM J. Control Optim.},
  title   = {Separable approximations of optimal value functions and their representation by neural networks},
  year    = {2026},
  number  = {3},
  pages   = {1099-1126},
  volume  = {64}
}

@article{fiedler2023dompc,
  author  = {Fiedler, F. and Karg, B. and Lüken, L. and Brandner, D. and Heinlein, M. and Brabender, F. and Lucia, S.},
  title   = {do-mpc: Towards FAIR nonlinear and robust model predictive control},
  journal = {Control Engineering Practice},
  volume  = {140},
  pages   = {105676},
  year    = {2023},
}

@article{oppeneiger2025spatialdecayperturbationshyperbolic,
title = {Spatial decay of perturbations in transport equations with optimal boundary control},
journal = {IFAC-PapersOnLine},
volume = {59},
number = {8},
author = {Benedikt Oppeneiger and Manuel Schaller and Karl Worthmann},
pages = {66-71},
year = {2025},
}

@book{slotine1991applied,
  title={Applied nonlinear control},
  author={Slotine, Jean-Jacques E and Li, Weiping},
  address = {Englewood Cliffs, NJ}, 
  year={1991},
  publisher={Prentice-Hall}
}

@article{ray2015analytic,
  title={An analytic solution to the equations of the motion of a point mass with quadratic resistance and generalizations},
  author={Ray, Shouryya and Fr{\"o}hlich, Jochen},
  journal={Archive of Applied Mechanics},
  volume={85},
  number={4},
  pages={395--414},
  year={2015},
  publisher={Springer}
}
\appendix
\end{document}